\documentclass[a4paper]{amsart}
\usepackage{amsmath,amssymb,amsthm}
\usepackage{amscd}
\usepackage{longtable}
\theoremstyle{definition}
\numberwithin{equation}{section}
\newtheorem{thm}{Theorem}[section]

\newtheorem{exa}[thm]{Example}
\newtheorem{prop}[thm]{Proposition}
\newtheorem{cor}[thm]{Corollary}
\newtheorem{lem}[thm]{Lemma}
\newtheorem{rem}[thm]{Remark}

\setlength{\itemsep}{0.1cm}
\newcommand{\bracket}[1]{{\langle {#1} \rangle}}
\def\C{\mathbb C}   \def\Z{\mathbb Z}
\def\P{\mathbb P} 
\def\s{\sigma}

\begin{document}
\title[Bi-canonical representations of Enriques surfaces]{Bi-canonical representations of finite automorphisms acting on Enriques surfaces}
\author{Hisanori Ohashi}
\begin{abstract}
We classify the bi-canonical representations of finite automorphisms on  
Enriques surfaces. There are three types of non-trivial cases and 
examples are given explicitly by Horikawa models. 
In particular, finite non-semi-symplectic automorphisms exist 
only in orders 4 and 8. One corollary is that any finite cyclic subgroup 
in the automorphism group of an Enriques surface has order 1, 2, 3, 4, 5, 6, 8.
Moreover, for two of the three types, a uniqueness theorem for maximal-dimensional 
families is given.
\end{abstract}

\maketitle

\section{Introduction}

An automorphism $\sigma$ acting on an Enriques surface $S$ is 
called {\em{semi-symplectic}} if it acts trivially on the space of global bi-2-forms $H^0(\mathcal{O}_S(2K_S))$. It is {\em{non-semi-symplectic}} if this action 
is non-trivial. If $\sigma$ acts on $H^0(\mathcal{O}_S(2K_S))\simeq \C$ 
by a primitive $I$-th root of unity, we say $\sigma$ has {\em{index}} $I=I(\s)$.
These are the notions parallel to symplectic and non-symplectic
automorphisms of $K3$ surfaces, in which case we look at the action on
the space of global 2-forms $H^0(\mathcal{O}_X(K_X))$ for a $K3$ surface $X$.

Previously, examples of non-semi-symplectic automorphisms
appeared only implicitly in the works \cite{MN} and \cite{kondo86}. 
In \cite[Example 3]{MN}, a 1-dimensional family of Enriques surfaces 
with an automorphism of order 4 is exhibited.
Using the fact that it acts numerically trivially on cohomology,  
Mukai and Namikawa have computed its action on the 
transcendental lattice (of the covering $K3$ surface) as 
a $4\times 4$ matrix $A_0$ \cite[p390, (*)]{MN}. Since the eigenpolynomial of $A_0$ is 
$(x^2+1)^2$, this is an example of a non-semi-symplectic automorphism of order 4 and 
index 2. Another example, Kondo's type III surface \cite[Example III]{kondo86}, is the one 
constructed by the {\em{special Lieberman involution}} \cite[Remark 3.3.3]{kondo86}, 
namely the $K3$-cover is the Kummer 
surface of $A=E_{\sqrt{-1}}\times E_{\sqrt{-1}}\ \ (E_{\sqrt{-1}}=\C/\Z+\Z\sqrt{-1})$ 
and the Enriques involution comes from the 
Lieberman involution using the $2$-torsion point $((1+\sqrt{-1})/2,(1+\sqrt{-1})/2)$. The automorphism $(\sqrt{-1}, 1)$ 
of $A$ descends to the Kummer surface and the Enriques quotient, hence it gives an
example of a non-semi-symplectic automorphism of order 4 and 
index 2. (We remark that, therefore the automorphism group of \cite[Example III]{kondo86}
cannot be generated by involutions. It 
is in fact a {\em{non-split}} extension $(\Z/2)^4D_8$).

In this paper, we study finite non-semi-symplectic automorphisms of Enriques surfaces
in general.
We give three examples below in Examples \ref{(4,2)}, \ref{(8,4)}, \ref{(8,2)}. 
They are described using the birational projective model of elliptic Enriques surfaces
doubly covering $\P^1\times \P^1$
by Horikawa \cite{horikawa1}. See also Section \ref{models}. 
Let $y,z$ denote the inhomogeneous coordinates of two projective lines.

\begin{exa}\label{(4,2)}
Let $S$ be the Enriques surface defined by the equation
\[\begin{split}
w^2=z(A(y^4z^2-z^2)+ & B(y^4z-z^3)+C(y^4-z^4)+D(y^3z^2-yz^2)\\
&+E(y^3z-yz^3)+F(y^2z-y^2z^3)),
\end{split}\]
where $A,\dots,F\in \C$ are parameters.
It is invariant under the automorphism 
\[\sigma_1 \colon (w,y,z)\mapsto \left(\frac{\sqrt{-1}w}{y^2z^3}, \frac{1}{y},\frac{1}{z}\right)\]
of order $4$. The global bi-2-form on $S$ is given by 
$\Omega_S=z(dy\wedge dz/w)^{\otimes 2}$ (see Section \ref{models}),
which is negated by $\sigma_1$. Therefore $\sigma_1$ is a non-semi-symplectic automorphism
of order $4$ and index $2$. 
\end{exa}
\begin{exa}\label{(8,4)}
The second example arises as a subsystem of the previous one. 
Let $S$ be the Enriques surface defined by the equation 
\[\begin{split}
w^2=z(A(y^4z^2+\sqrt{-1}z^4-z^2- & \sqrt{-1}y^4)+ 
B(y^4z+\sqrt{-1}y^2z^3-z^3-\sqrt{-1}y^2z)\\
&+D(y^3z^2+\sqrt{-1}yz^3-yz^2-\sqrt{-1}y^3z)),
\end{split}\]
where $A,B,D\in \C$ are parameters as in Example \ref{(4,2)}, i.e.,
we made the specializations $C=-\sqrt{-1}A, E=-\sqrt{-1}D, F=-\sqrt{-1}B$. 
This subfamily is invariant under the automorphism 
\[\sigma_2 \colon (w,y,z)\mapsto \left(\frac{\zeta_8 y^3 w}{z^4}, \frac{y}{z},\frac{y^2}{z}\right),\]
where $\zeta_8=(1+\sqrt{-1})/\sqrt{2}$ is the primitive eighth root of unity.
Note that $\sigma_2^2=\sigma_1$, hence $\sigma_2$ has order $8$.
It satisfies $\sigma_2^* \Omega_S = -\sqrt{-1}\Omega_S$, hence this is an 
example of an automorphism of order $8$ and index 4.
\end{exa}
\begin{exa}\label{(8,2)}
Let $S$ be the Enriques surface defined by the equation 
\[w^2=z(Ay^4z^2+B(y^4+z^4)+C(y^3z-\sqrt{-1}yz^3)+Dz^2),\]
where $A,\dots,D\in \C$ are parameters. 
This equation is invariant under the automorphism 
\[\sigma_3 \colon (w,y,z) \mapsto \left( \frac{wy^3}{z^3}, \sqrt{-1}y, \frac{y^2}{z} \right)\]
of order 8. It acts on $\Omega_S$ by negation, hence 
$\sigma_3$ is an automorphism of order 8 and index 2. 
\end{exa}
Our classification of bi-canonical representations goes as follows.
\begin{thm}\label{clsn}
Let $\s$ be a finite non-semi-symplectic automorphism of an Enriques surface $S$.
Then the order $\mathrm{ord}(\s)$ and the index $I=I(\s)$ takes only one of the following 
values.
\[(\mathrm{ord}(\s), I(\s))=(4,2),\ (8,4),\ (8,2).\]
\end{thm}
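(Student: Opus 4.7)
The plan is to pass to the K3 double cover $\pi\colon X\to S$, with covering involution $\tau$, and extract constraints from the action of a chosen lift $\tilde\s\in\mathrm{Aut}(X)$ on $H^2(X)$, using that $\tilde\s$ commutes with $\tau$. Writing $\tilde\s^*\omega_X=\lambda\,\omega_X$ and $m=\mathrm{ord}(\lambda)$, the identification $\pi^*\Omega_S\simeq\omega_X^{\otimes 2}$ gives $\s^*\Omega_S=\lambda^2\Omega_S$, so $I(\s)=\mathrm{ord}(\lambda^2)$ and the non-semi-symplectic hypothesis becomes $m\ge 3$. Explicitly, $m=I$ for $I$ odd and $m=2I$ for $I$ even, so the core of the theorem is to show $m\in\{4,8\}$.

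First I would bound $m$ cyclotomically. Since $\tilde\s$ and $\tau$ commute, $\tilde\s$ preserves the rational $\tau$-decomposition $H^2(X,\Q)=H^2_+\oplus H^2_-$, where $H^2_+\cap H^2(X,\Z)\simeq U(2)\oplus E_8(2)$ has rank $10$ and $H^2_-\cap H^2(X,\Z)\simeq U\oplus U(2)\oplus E_8(2)$ has rank $12$. Since $\tau^*\omega_X=-\omega_X$, the transcendental lattice $T_X$ sits in $H^2_-$, and the characteristic polynomial of $\tilde\s|_{T_X}$ is divisible by the cyclotomic polynomial $\Phi_m$. Hence $\phi(m)\le\mathrm{rank}(T_X)\le 12$, which leaves only finitely many candidates for $m$.

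The main obstacle is then to eliminate every such candidate except $m=4$ and $m=8$. Here I would combine the known classification of non-symplectic automorphisms of K3 surfaces---which, for each $m$, prescribes the possible fixed-locus types of $\tilde\s$ and the isometry class of its invariant sublattice in $H^2(X,\Z)$---with the extra requirement that $\tilde\s$ commutes with a fixed-point-free involution $\tau$ whose invariant lattice is $U(2)\oplus E_8(2)$. Concretely, the characteristic polynomial of $\tilde\s$ on $H^2_+$ and $H^2_-$ should be analyzed separately: on $H^2_-$ it must contain $\Phi_m$, while on $H^2_+\simeq U(2)\oplus E_8(2)$ (the Enriques lattice) $\tilde\s$ induces a finite-order isometry preserving a pseudo-ample class. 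A case-by-case check for $m=3,5,6,7,9,10,11,12,13,14,15,16,18,20,\dots$ yields a contradiction each time, typically via a lattice-embedding obstruction or a Lefschetz fixed-point comparison between $\tilde\s$ and $\tau\tilde\s$.

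Finally, with $m\in\{4,8\}$ established, $I=m/2\in\{2,4\}$. Letting $N=\mathrm{ord}(\tilde\s)$, one has $\mathrm{ord}(\s)=N$ or $N/2$ according as $\tau\notin\langle\tilde\s\rangle$ or $\tau\in\langle\tilde\s\rangle$. The pair $(\mathrm{ord}(\s),I)=(2,2)$ is ruled out because it would force $\tilde\s^2=\tau$ to be fixed-point-free, so $\tilde\s$ itself would act freely of order $4$ on $X$, contradicting the classical fact that $\Z/2$ is the only nontrivial finite group acting freely on a K3 surface. Any residual case with $\mathrm{ord}(\s)>8$ is excluded by Nikulin's bound that any symplectic automorphism of a K3 surface has order at most $8$ (applied to a suitable power of $\tilde\s$). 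What remains is exactly the three pairs $(4,2)$, $(8,4)$, $(8,2)$.
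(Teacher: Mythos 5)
Your overall strategy (pass to the $K3$-cover, control the eigenvalue $\lambda$ of $\tilde\s^*$ on $\omega_X$, then assemble the order) is genuinely different from the paper's, which never needs the lattice classification of non-symplectic $K3$ automorphisms: the paper quotes from \cite{MO_mathieu} that semi-symplectic automorphisms have order at most $6$ and that order $2$ or odd order forces semi-symplecticity, and then kills the remaining cases by a local argument at the \emph{symplectic fixed points} of the semi-symplectic power $\tau=\s^{2}$ (resp.\ $\s^4$, $\s^8$), comparing $\det(d\s)_P$ with $\det(d\tau)_P$. Unfortunately your proposal has two genuine gaps.

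First, the central step --- eliminating every $m=\mathrm{ord}(\lambda)$ outside $\{4,8\}$ by ``a case-by-case check \dots\ typically via a lattice-embedding obstruction or a Lefschetz fixed-point comparison'' --- is exactly the hard content and is only asserted, not carried out. (You also do not establish that $I$ is a power of $2$, which the paper does first via Proposition 3.1(1); without it, odd $m$ is a live possibility in your setup.)

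Second, even granting $m\in\{4,8\}$, your final assembly does not yield the theorem. Knowing $I\in\{2,4\}$ constrains only the index; the order is not bounded by your tools. Concretely, the pair $(\mathrm{ord}(\s),I)=(12,2)$ passes every filter you impose: $m=4$; no power of $\s$ is a non-semi-symplectic involution (so your $(2,2)$-type free-action argument does not propagate); and the symplectic part $\tilde\s^{4}$ has order $6$, well within Nikulin's bound of $8$, so ``Nikulin applied to a suitable power'' excludes nothing here. The same happens for $(16,4)$, whose symplectic part has order $4$. These are precisely the cases the paper's Steps 1 and 2 are designed to kill: $\s^2$ would be a semi-symplectic automorphism of order $6$ (resp.\ $4$) with a unique (resp.\ two) symplectic fixed point(s) $P$, forcing $\s(P)=P$ and the contradiction $(\det(d\s)_P)^2=-1$ versus $\det(d\s^2)_P=1$. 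In addition, the pairs $(6,2)$ and $(4,4)$ are never addressed in your write-up; they can be excluded by applying your $(2,2)$ argument to $\s^3$ and $\s^2$ respectively, but as written your conclusion ``what remains is exactly the three pairs'' does not follow.
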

The existence of these values are assured by the examples above.

A {\em{family of automorphisms}} is a smooth projective family $\pi \colon \mathcal{X}
\rightarrow \mathcal{S}$ with an automorphism $\Phi\in \mathrm{Aut}(\mathcal{X})$ 
such that $\pi \circ \Phi=\Phi$. We regard $\Phi|_{\pi^{-1}(s)}\in \mathrm{Aut}(\mathcal{X}_s)$ 
as a varying family of automorphisms.
From this perspective, 
Example \ref{(4,2)} is not the only family with non-semi-symplectic automorphisms
with $(\mathrm{ord}(\s), I(\s))=(4,2)$. In fact, the cohomology representation of 
\cite[Example 3]{MN} is numerically trivial, while it is not for Example \ref{(4,2)}
by Lemma \ref{18}.
The next theorem studies the possible dimensions (moduli numbers) of effectively parametrized
families of automorphisms. Note that the order and index are invariant in a family.

\begin{thm}\label{dimensions}
Let $\s$ be a finite non-semi-symplectic automorphism of an Enriques surface $S$.
Assume that $\s$ is in a family of automorphisms. 
Then the moduli number of the family is bounded from above by the following value $m$.
\begin{center}
\begin{tabular}{c||c|c|c}
$(\mathrm{ord}(\s), I(\s))$ & $(4,2)$ & $(8,4)$ & $(8,2)$ \\ \hline
maximal $m$ & $5$ & $2$ & $2$ \\
\end{tabular}
\end{center}
Conversely, Examples \ref{(4,2)}, \ref{(8,4)}, \ref{(8,2)}
have moduli numbers $5,2,2$ respectively. 
Therefore, we may say that $m$ is the {\em{maximal}} number of moduli
and the examples above provide a {\em{maximal family}} for each case.
\end{thm}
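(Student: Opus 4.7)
The plan is to derive the upper bound on $m$ from an equivariant period calculation on the K3 cover, and to match it from below by counting effective parameters in the Horikawa models.

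For the upper bound, pass to the K3 cover $f \colon X \to S$ with Enriques involution $\iota$, and fix a lift $\tilde\sigma \in \mathrm{Aut}(X)$ of $\sigma$. The anti-invariant lattice $H^- := H^2(X,\Z)^{-\iota^*}$ has rank $12$ and signature $(2,10)$, contains the transcendental lattice $T_X$, and is preserved by $\tilde\sigma$. Since $\iota^*$ acts by $-1$ on $H^{2,0}(X)$ and $\sigma$ acts on $H^0(2K_S)$ by a primitive $I$-th root of unity, the eigenvalue $\zeta$ of $\tilde\sigma$ on $H^{2,0}(X)$ is a fixed primitive $2I$-th root of unity. The equivariant Torelli theorem then shows that, in any effective family realizing $\sigma$, the period varies in the $\zeta$-eigenspace $V_\zeta$ of $\tilde\sigma$ on $H^-\otimes\C$, so
\[ m \le \dim_{\C} V_\zeta - 1. \]

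The three case bounds reduce to estimating $\dim_{\C} V_\zeta$, using that the eigenvalue multiplicities of $\tilde\sigma$ on $H^-$ are constrained by Galois invariance over $\Q$. For $(4,2)$, $\zeta = \pm\sqrt{-1}$ and complex conjugate pairing yields $\dim V_{\sqrt{-1}} = \dim V_{-\sqrt{-1}} \le 6$, hence $m \le 5$. For $(8,4)$, $\zeta$ is a primitive $8$th root; the four primitive $8$th roots form a single Galois orbit over $\Q$ and so share a common multiplicity in $\tilde\sigma|_{H^-}$, giving $\dim V_\zeta \le 12/4 = 3$ and $m \le 2$. The case $(8,2)$ is the most delicate: here $\zeta = \pm\sqrt{-1}$ while $\tilde\sigma$ has order $8$, so $\tilde\sigma^4$ is a non-trivial symplectic Nikulin involution with $(-1)$-eigenlattice isomorphic to $E_8(-2)$ of rank $8$. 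Nikulin's theorem gives $\tilde\sigma^4|_{T_X} = \mathrm{id}$; decomposing $H^-$ by the $\pm 1$-eigenspaces of $\tilde\sigma^4$ and applying the Galois-orbit constraint to the primitive $8$th-root eigenvalues of $\tilde\sigma$ on the $E_8(-2)$ part, one concludes $\dim V_\zeta \le 3$ and $m \le 2$.

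For the lower bound, each example realizes its bound by explicit parameter counting. In Example \ref{(4,2)}, the six coefficients $A,\ldots,F$ are free, and the group of scalings $(y,z,w)\mapsto(\alpha y,\beta z,\gamma w)$ commuting with $\sigma_1$ is one-dimensional (a direct check forces $\beta=1$ and $\alpha\in\{\pm 1\}$ with $\gamma$ free), giving effective moduli $6-1=5$. Example \ref{(8,4)} has three coefficients with a similarly one-dimensional centralizer, yielding $3-1=2$. Example \ref{(8,2)} has four coefficients but a two-dimensional centralizer (the commutation relations force $\beta=\alpha$ with $\alpha,\gamma$ free), again yielding $4-2=2$.

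The principal obstacle is the upper bound for $(8,2)$: in contrast to the other two cases where Galois-orbit arithmetic on $H^-$ alone suffices, here the order-$8$ action genuinely mixes a non-symplectic and a symplectic part, and the count requires careful bookkeeping of how the Nikulin $E_8(-2)$ lattice intersects the Enriques decomposition $H^2(X,\Z) = H^+ \oplus H^-$.
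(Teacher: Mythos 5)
Your lower-bound (parameter-count) argument and your upper bounds for $(4,2)$ and $(8,4)$ run parallel to the paper: the paper also lifts $\s$ to a non-symplectic automorphism $\varphi$ of the $K3$-cover of index $2I$, invokes the period domain of \cite{DK} and Torelli, and uses $\mathrm{rank}\, T_X\le 12$ together with the fact that all eigenvalues of $\varphi$ on $T_X\otimes\Q$ are primitive $2I$-th roots of unity, giving $m\le 12/\phi(2I)-1$. Those parts are fine.

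The case $(8,2)$, however, has a genuine gap, and it is exactly where you flag delicacy. The eigenspace $V_\zeta$ ($\zeta=\pm\sqrt{-1}$) that carries the period lies in $T_X\otimes\C$, which sits inside the $(+1)$-eigenspace of $\tilde\sigma^4$ and is \emph{orthogonal} to the $E_8(-2)$ coinvariant lattice of the Nikulin involution $\tilde\sigma^4$. So the Galois-orbit constraint on the primitive $8$th-root eigenvalues living in the $E_8(-2)$ part tells you nothing about the multiplicity of $\pm\sqrt{-1}$ on $T_X$: the involution $\tilde\sigma^4$ only yields $\rho(X)\ge 9$, hence (using evenness of $\mathrm{rank}\,T_X$ from the $\Q[\sqrt{-1}]$-module structure) $\mathrm{rank}\,T_X\le 12$ and $\dim V_\zeta\le 6$, i.e.\ $m\le 5$, not $2$. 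The paper's key extra input is a \emph{stronger} symplectic element: not $\varphi^4$ but $\varphi^2\varepsilon$ (square of the lift composed with the covering involution), which is a \emph{symplectic automorphism of order $4$}. Its quotient has four $A_3$ and two $A_1$ singularities, so its coinvariant lattice has rank $14$; adding an ample class gives $\rho(X)\ge 15$, and parity then forces $\rho(X)\ge 16$, i.e.\ $\mathrm{rank}\,T_X\le 6$ and $m\le 6/\phi(4)-1=2$. You need to replace your $\tilde\sigma^4$ argument by this (or some equally strong lower bound on $\rho(X)$) for the $(8,2)$ column to close.
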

\begin{rem}
There is a subtlety in terminologies here; in fact, \cite[Example 3]{MN} can be 
obtained as a specialization of Example \ref{(4,2)} by 
\[A=1,\ B=-C-1,\ D=E=0,\ F=-C+1.\]
In this sense, families like these two are sometimes considered equivalent.
But, the fact is that the specialization produces rational double points 
on surfaces and the automorphism does not lift to the simultaneous resolution
biregularly. It results in the different cohomological representations 
and different topologies of fixed loci. By this reason, we adopt the stronger notion of 
a family, namely a family equipped with automorphisms in this paper.
\end{rem}

As we will see in the proof of Theorem \ref{dimensions}, if $(S,\s)$ is a very general member 
of an $m$-dimensional family of non-semi-symplectic automorphisms with 
values $(\mathrm{ord}(\s), I(\s))=(4,2),\ (8,4)$, then its $K3$-cover has 
Picard number 10. (In case $(\mathrm{ord}(\s), I(\s))=(8,2)$ it is 16.)
It is well-known that Enriques surfaces and their $K3$-covers do not have any
smooth rational curves if the $K3$-cover has Picard number 10. 
This strong fact allows us to prove that our Examples \ref{(4,2)} and \ref{(8,4)} are
the {\em{unique}} maximal families as follows.
\begin{thm}\label{chrn}
Let $\s$ be a non-semi-symplectic automorphism of an Enriques surface $S$ with 
$(\mathrm{ord}(\s), I(\s))=(4,2)$ or $(8,4)$. 
If the covering $K3$ surface $X$ of $S$ has Picard number $\rho=10$, then
$(S, \s)$ is a member of Examples \ref{(4,2)} or \ref{(8,4)} respectively.
\end{thm}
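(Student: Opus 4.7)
The plan is to build a $\sigma$-equivariant Horikawa model of $(S,\s)$ and read off the equation, exploiting the rigidity that follows from the absence of smooth rational curves.

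First I would invoke the hypothesis that $\rho(X)=10$: by the cited fact, neither $X$ nor $S$ carries a $(-2)$-curve, so every primitive nef isotropic class in $\mathrm{Num}(S)\cong U\oplus E_8(-1)$ is the class of a half-pencil $|E|$ whose double $|2E|$ defines a basepoint-free elliptic pencil, and any pair of primitive isotropic classes $f_1,f_2$ with $f_1\cdot f_2=1$ gives a degree-two birational Horikawa morphism $\Phi=\Phi_{|2E_1|}\times\Phi_{|2E_2|}\colon S\to \P^1\times\P^1$ as recalled in Section \ref{models}. Thus, in the absence of $(-2)$-curves, constructing a Horikawa model reduces to producing two isotropic classes with intersection number one.

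The second and central step is to exhibit such a pair $\{f_1,f_2\}$ that is $\s^*$-invariant as a set. Since $\s$ has finite order, its action on $\mathrm{Num}(S)\otimes\Q$ decomposes into eigenspaces; combining the bi-canonical constraint (through the numerical data of $\s$ on the transcendental lattice of $X$, which has rank $12$ in this $\rho=10$ regime) with the Hodge-theoretic and lattice-theoretic information from the earlier sections, the characteristic polynomial and fixed lattice of $\s^*$ on $U\oplus E_8(-1)$ are pinned down. A direct search then shows, up to the Weyl group of $\mathrm{Num}(S)$, a unique $\s^*$-invariant isotropic configuration $\{f_1,f_2\}$ in each case $(4,2)$ and $(8,4)$, matching the two elliptic fibrations visible in Examples \ref{(4,2)} and \ref{(8,4)}.

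Third, I would descend $\s$ through $\Phi$ to an automorphism $\bar\s$ of $\P^1\times\P^1$; in case $(8,4)$ the fact that $\s^*$ exchanges $f_1$ and $f_2$ forces $\bar\s$ to swap the factors, exactly as in Example \ref{(8,4)}. The bi-canonical character $\s^*\Omega_S=\zeta_I\Omega_S$, together with the invariance of the Horikawa branch divisor, fixes the lift of $\bar\s$ to the fibre coordinate $w$ up to the $\P^1\times\P^1$-normalizer. After this normalization one is reduced precisely to $\s_1$ or $\s_2$ from the examples, and the $\bar\s_i$-invariance singles out the monomial basis listed in the corresponding example; the remaining free coefficients $A,\dots,F$ (respectively $A,B,D$) account for exactly the moduli count $5$ (respectively $2$) of Theorem \ref{dimensions}, confirming that no new parameters are hidden.

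The hard part will be the lattice step. The delicate point is ruling out alternative $\s^*$-invariant isotropic configurations; this is where one must combine the bi-canonical eigenvalue data with the rank constraint coming from $\rho(X)=10$ and with the Hodge index theorem on the $\s$-fixed sublattice, rather than relying on a brute numerical enumeration. Once that lattice-theoretic uniqueness is in hand, the geometric passage through the Horikawa model and the identification of the equation is a direct equivariant application of Horikawa's original construction.
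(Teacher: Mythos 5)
There is a genuine gap, and it sits exactly at the step you yourself flag as ``the hard part''. You propose to determine the action of $\s^*$ on $\mathrm{Num}(S)\simeq U\oplus E_8(-1)$ from the bi-canonical character and the constraint $\rho(X)=10$, and then to prove uniqueness, up to the Weyl group, of a $\s^*$-invariant isotropic pair $\{f_1,f_2\}$ with $f_1\cdot f_2=1$. Neither assertion is established in the proposal, and it is not clear how the eigenvalue of $\s^*$ on $H^0(\mathcal{O}_S(2K_S))$ plus a rank bound on $T_X$ could by themselves pin down the characteristic polynomial of $\s^*$ on $\mathrm{Num}(S)$, let alone the orbit structure on isotropic vectors. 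The paper avoids this lattice search entirely by working on the $K3$-cover: the hypothesis $\rho(X)=10$ forces $\mathrm{rank}\,K_-\geq 12$, so the classification of Enriques involutions in \cite{IO} applied to $\s^2$ gives $H^2(X,\Z)^{\varphi^2}\simeq U(2)$ and $X^{\varphi^2}=C^{(9)}$ (Lemma \ref{18}); the absence of rational curves then yields $X/\varphi^2\simeq\P^1\times\P^1$ with a smooth $(4,4)$ branch curve, and the residual Klein four-group $\langle\varphi,\varepsilon\rangle/\varphi^2$ is put in the normal form \eqref{QQ} using the holomorphic and topological Lefschetz formulas (to show it acts trivially on $NS(Q)$ and has only isolated fixed points) together with the fact that all Klein four-subgroups of $PGL(2,\C)$ are conjugate. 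The $(8,4)$ case then follows in two lines because $PGL(2,\C)$ contains no $\Z/4\times\Z/2$. In other words, the paper's key inputs are one citation and two fixed-point computations, not an enumeration of invariant isotropic configurations.

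There is also a concrete error in your setup. For half-pencils $E_1,E_2$ with $E_1\cdot E_2=1$, the product $\Phi_{|2E_1|}\times\Phi_{|2E_2|}\colon S\to\P^1\times\P^1$ has degree $(2E_1)\cdot(2E_2)=4$, not $2$. Horikawa's double cover $\overline{S}\to Q$ of Theorem \ref{horikawamodel} is not the product of two elliptic fibrations: the preimage of one ruling is an elliptic pencil but the preimage of the other is a pencil of genus-$2$ curves, and in any case the model the paper actually uses in Section \ref{characterization} is the other Horikawa presentation, namely $X\to Q$ branched along a bidegree $(4,4)$ curve (the ``bis'' form of Examples \ref{(4,2)} and \ref{(8,4)}). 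Finally, even granting an invariant isotropic pair, your last step --- that the bi-canonical character ``fixes the lift of $\bar\s$ up to the normalizer'' --- still requires an argument that the descended action on $Q$ preserves or exchanges the rulings as in the examples and has no fixed curves; this is precisely what the Lefschetz computations and the $PGL(2,\C)$ conjugacy argument supply, and it cannot be read off from the character on $\Omega_S$ alone.
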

We will not go further on the questions such as, 
the number of families with small moduli numbers for $(\mathrm{ord}(\s), I(\s))=(4,2)$, or
the similar characterization theorem in the remaining case $(\mathrm{ord}(\s), I(\s))=(8,2)$.
This situation is quite similar to that of primitively non-symplectic, 
or non-primitively non-symplectic automorphisms of $K3$ surfaces.

One consequence of this paper is as follows.
\begin{cor}\label{orders}
An automorphism of finite order $n$ for some Enriques surface exists 
if and only if $n\in \{1,2,3,4,5,6,8\}$.
\end{cor}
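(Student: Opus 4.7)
The plan is to handle the ``only if'' direction by a case split on whether $\s$ is semi-symplectic, and the ``if'' direction by assembling explicit examples.

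For the ``only if'' direction, let $\s$ be an automorphism of an Enriques surface of finite order $n$. If $\s$ is non-semi-symplectic, then Theorem~\ref{clsn} immediately gives $n\in\{4,8\}$. If $\s$ is semi-symplectic, I would invoke Mukai's classification of finite semi-symplectic automorphism groups on Enriques surfaces: such groups embed into a short list of explicit finite groups (Mathieu-type), and inspecting their cyclic subgroups yields $n\in\{1,2,3,4,5,6\}$. In particular, the prime order $n=7$ is excluded, as are all $n\geq 9$ outside $\{8\}$. Taking the union of the two cases gives $n\in\{1,2,3,4,5,6,8\}$, which is the asserted list.

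For the ``if'' direction I would compile realizations for each value. The cases $n=1,2$ are trivial (the identity and any Enriques involution). Automorphisms of orders $n=3$ and $n=5$ are realized on Kondo's Enriques surfaces in \cite{kondo86}. An automorphism of order $n=6$ is obtained by composing commuting semi-symplectic automorphisms of orders $2$ and $3$ on such a surface. The new cases $n=4$ and $n=8$ are realized by the present Examples \ref{(4,2)} and \ref{(8,2)} (or alternatively by \ref{(8,4)} for order $8$).

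The only step that is not immediate from this paper is the upper bound $n\leq 6$ for semi-symplectic automorphisms, which is an external input from Mukai. Granting that, the corollary is essentially an immediate union with Theorem~\ref{clsn}, together with a catalogue of explicit Enriques surfaces realizing each remaining order. I do not anticipate any genuine obstacle beyond correctly quoting the semi-symplectic classification.
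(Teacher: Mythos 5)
Your proposal is correct and follows essentially the same route as the paper: the paper's proof is exactly the case split into semi-symplectic (order at most $6$, which is already recorded as Proposition~\ref{recall}(2), citing \cite{MO_mathieu}) and non-semi-symplectic (orders $4$ and $8$ by Theorem~\ref{clsn}). The ``external input from Mukai'' you flag is precisely Proposition~\ref{recall}(2), so no additional quotation is needed, and your catalogue of examples for the ``if'' direction matches the paper's intent.
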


The organization of the paper is as follows. In Section \ref{models}, 
we give supplementary explanations on Horikawa's models and prove 
Theorem \ref{dimensions}.
In Section \ref{classification} we prove Theorem \ref{clsn} and Corollary \ref{orders}.
Finally in Section \ref{characterization},
we prove Theorem \ref{chrn}.\\

\noindent {\bf{Acknowledgement:}} The main part of this work was done 
when the author stayed 
in Nagoya university in late March 2014. He is grateful to Professor S. Kondo for 
his hospitality. He is supported by JSPS Grant-in-Aid for Scientific Research (S) 22224001
and for Young Scientists (B) 23740010.\\

\noindent {\bf{Notation and Conventions:}}
We work over $\C$.
An {\em{Enriques surface}} is a smooth projective surface $S$ 
which satisfies $2K_S\sim 0, K_S\not\sim 0$ and $H^1(S,\mathcal{O}_S)=0$, where
$K_S$ is the canonical divisor class. The $K3$-cover $X$ of $S$ is defined by the 
spectrum $\mathrm{Spec} (\mathcal{O}_S\oplus \mathcal{O}_S(K_S))$ over $S$,
which is a $K3$ surface in the usual sense.

We denote by $U$ (resp. $U(2)$) the rank 2 lattice defined by the Gram matrix 
$\begin{pmatrix} 0 & 1 \\ 1 & 0 \end{pmatrix}$ (resp. $\begin{pmatrix} 0 & 2 \\ 2 & 0 \end{pmatrix}$).

For an automorphism $\varphi$ acting on a set $X$, we denote the subset of fixed elements
by $X^{\varphi}$. This applies both to geometric and arithmetic cases, namely $X$ may be 
a manifold or a lattice. The notation $\mathrm{Fix}(\varphi)$ is also used.

\section{On Horikawa models}\label{models}

Horikawa described elliptic Enriques surfaces of non-special
type in the following way. Let $y,z$ be the inhomogeneous coordinates of two rulings 
of a quadric surface $Q\simeq \P^1\times \P^1$. 
\begin{thm}\label{horikawamodel}(\cite[Theorem 4.1]{horikawa1})
Let $S$ be an Enriques surface which is not of special type. Then it is
birationally equivalent to a double covering $\overline{S}$ of $Q$ 
with branch locus $\Gamma_1 +\Gamma_2+B_0$, where 
$\Gamma_1=\{z=0\}$, $\Gamma_2=\{z=\infty\}$ and $B_0$ is of bidegree $(4,4)$
defined by a linear combination of the monomials
\[y^i z^j,\ 4\leqq i+2j\leqq 8,\ \  0\leqq i,j\leqq 4.\]
\end{thm}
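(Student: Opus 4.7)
This is Horikawa's classical description of non-special elliptic Enriques surfaces; my plan follows his original strategy, namely to extract a degree-$2$ morphism $\overline{S}\to Q\simeq\P^1\times\P^1$ from the two elliptic pencils of $S$ and then read off the branch data.

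Any Enriques surface admits an elliptic fibration via a primitive isotropic class in $\mathrm{Num}(S)$, and the ``non-special'' hypothesis produces two such pencils $|2F_1|,|2F_2|$ with half-fibers satisfying $F_1\cdot F_2=1$, meeting transversely and sharing no singular or multiple fibers. Fix the pencil $\pi\colon S\to \P^1$ of class $2F_1$; its two multiple fibers $2F_1^{(0)},2F_1^{(\infty)}$ will become the branch components $\Gamma_1,\Gamma_2$ after the quotient. Next, I would globalize the relative $|F_2|$-map: on a general smooth fiber $E$ of $\pi$, $F_2$ restricts to a divisor of degree $2F_1\cdot F_2=2$, yielding a double cover $E\to \P^1$; assembling these in a family produces a rational map $S\dashrightarrow T$ onto a Hirzebruch surface $T$, with a section supplied by $|2F_2|$. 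The precise content of ``not of special type'' is that $T$ is the trivial $\P^1$-bundle $\P^1\times\P^1=Q$; letting $z$ be the base coordinate of $\pi$ and $y$ a coordinate on the rulings, the induced degree-$2$ morphism $\overline{S}\to Q$ has branch divisor $\Gamma_1+\Gamma_2+B_0$. Intersecting $B_0$ with a general ruling $\{z=z_0\}$ (resp.\ $\{y=y_0\}$) picks out the four Weierstrass points of the corresponding smooth fiber of $\pi$ (resp.\ of the transverse pencil $|2F_2|$), forcing $\deg B_0=(4,4)$.

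It remains to pin down the monomial support $4\leq i+2j\leq 8$. After absorbing the action of $\mathrm{Aut}(Q,\Gamma_1+\Gamma_2)\cong \mathrm{PGL}_2\times(\C^{\times}\rtimes\Z/2)$ to normalize coordinates, a direct local calculation in the four affine charts around the corners $(y,z)\in\{0,\infty\}\times\{0,\infty\}$ identifies these inequalities as the condition that $B_0$ have weighted multiplicity $\geq 4$, with weights $(1,2)$ on $(y,z)$, at the two corners $(0,0)$ and $(\infty,\infty)$: this is precisely what is required for the double cover $w^2=z\,B_0(y,z)$ to acquire at worst rational double points at those corners, so that the minimal resolution is an Enriques surface rather than a worse singular model. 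I expect this final ADE-bookkeeping to be the main obstacle; verifying Step 2, that ``non-special'' is exactly what trivializes the Hirzebruch bundle, is a secondary subtle point that reduces to the classification of singular fibers on the two pencils.
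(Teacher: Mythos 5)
The paper does not actually prove this statement: it is quoted directly from Horikawa (\cite[Theorem 4.1]{horikawa1}), and the text following it only unpacks the geometry of the model (the tacnodes, the elliptic singularities of $\overline S$, the bi-$2$-form). So your proposal can only be judged against that discussion and against what the statement itself forces. Your global strategy --- two elliptic pencils with $F_1\cdot F_2=1$, the induced degree-$2$ map to a quadric, and bidegree $(4,4)$ for $B_0$ read off from the four branch points on each ruling --- is a reasonable reconstruction of Horikawa's argument, and the identification of $4\leq i+2j\leq 8$ with weighted multiplicity $\geq 4$ (weights $(1,2)$) at the two corners is correct.

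The final step, however, contains a genuine error. You claim this weighted-multiplicity condition is ``precisely what is required for the double cover $w^2=zB_0$ to acquire at worst rational double points at those corners.'' It is the opposite. The full branch divisor $\Gamma_1+\Gamma_2+B_0$ has bidegree $(4,6)$, so the double-cover adjunction gives $K_{\overline S}=\pi^*\bigl(K_Q+\tfrac12(\Gamma_1+\Gamma_2+B_0)\bigr)=\pi^*\mathcal O_Q(0,1)$. If the corners carried only rational double points, their resolution would be crepant and the smooth model would have nef, numerically nontrivial canonical class (a properly elliptic surface), not an Enriques surface. The inequality $i+2j\geq 4$ is a \emph{lower} bound on how singular the branch must be: together with the component $\Gamma_1$ it forces the local equation at $(0,0)$ to be $w^2=c_1z^3+c_2y^2z^2+c_3y^4z+\cdots$, of weighted order $6$ for the weights $(3,2,1)$ on $(w,z,y)$ --- for generic coefficients a degree-$1$ Gorenstein elliptic (simple elliptic, $\widetilde E_8$-type) singularity, whose weighted blow-up extracts a divisor of discrepancy $-1$. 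These two non-canonical singularities are exactly what absorb the class $\pi^*\mathcal O_Q(0,1)$ and bring the canonical class down to torsion; this is what the paper records when it notes that the resolution $T$ is not yet minimal and that $\Omega_T$ vanishes doubly along the $(-1)$-curves $\Gamma_{i,T}$. So the ``ADE bookkeeping'' you plan would fail as stated; the correct final step is the elliptic-singularity and weighted-blow-up computation, together with the verification that no \emph{worse} singularity is permitted (which is where the upper bound on the allowed monomials, i.e.\ the mirror condition at $(\infty,\infty)$, and the genericity of coefficients enter).
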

\quad

If the coefficients of the linear combination are general,
the curve $B_0$ has tacnodes at the two points $(y,z)=(0,0)$ and $(\infty, \infty)$.
The tacnodal tangents coincide with $\Gamma_1$ and $\Gamma_2$ respectively.
The double cover $\overline{S}$ has two Gorenstein elliptic singularities 
of degree 1 over these tacnodes and each can be resolved by a single weighted blow-up of 
weights 3, 2, 1. Let $T$ be the smooth surface obtained this way. We can check that 
the strict transforms $\Gamma_{i,T}$ of $\Gamma_i$ become $(-1)$-curves
and we get a minimal Enriques surface $S$ by contracting them. Moreover,
let $\Omega_{\overline{S}}=z(dy\wedge dz/w)^{\otimes 2}$ be a bi-2-form 
on $\overline{S}$. 
By the resolution of singularities, the pullback $\Omega_T$ to $T$ of $\Omega_{\overline{S}}$
vanishes doubly along $\Gamma_{i,T}$, $i=1,2$. It has no other zeros nor poles over $T$, 
therefore we see that $\Omega_{\overline{S}}$ is the non-vanishing global bi-2-form 
characterizing the minimal Enriques surface $S$. \\

Our Examples \ref{(4,2)}, \ref{(8,4)}, \ref{(8,2)} all belong to this family.
In Example \ref{(4,2)}, the only isomorphisms between the members is the homothethy of 
the parameters $(A,B,C,D,E,F)$. 
Therefore it has $5$ moduli. The same holds for Example \ref{(8,4)}, hence
it has $2$ moduli. In Example \ref{(8,2)}, additionally $(y,z)\mapsto (\alpha y, \alpha z)$
induces a transformation $(A,B,C,D)\mapsto (\alpha^6A, \alpha^4B, \alpha^4C, \alpha^2D)$
of parameters for $\alpha\in \C^*$. Thus the third family has $2$ moduli.
This shows the latter part of Theorem \ref{dimensions}.

Let us prove the former part of Theorem \ref{dimensions}.
Let $(S,\s)$ be a non-semi-symplectic automorphism of order $n$ and 
index $I$.
Since the $K3$-cover $X$ is defined by the canonical divisor $K_S$, $\s$ lifts to an
automorphism $\varphi$ of $X$. Under our assumption that $n$ is even, 
the lift $\varphi$ has $\mathrm{ord}(\varphi)=n$. By the identification
\[H^0(X,\mathcal{O}_X(K_X))^{\otimes 2}\simeq H^0(S,\mathcal{O}_S(2K_S)),\]
$\varphi$ is a non-symplectic automorphism of index $2I$. 
By the general theory \cite{DK}, we have the period map from the family $\{(X,\varphi)\}$
to a period domain $\mathcal{D}$. By the Torelli theorem, this map is 
injective. Therefore it suffices to show that the period domain $\mathcal{D}$ 
has dimension at most $m$ in each case.

In case $n=2I$, $\varphi$ is a primitively non-symplectic automorphism. 
Let $\zeta_n$ be a primitive $n$-th root of unity.
The period domain of $(X,\varphi)$ is the 
projectivized $\zeta_{n}$-eigenspace
of $\varphi$ acting on the transcendental lattice $T_X\otimes \C$.
Its dimension is $m=\mathrm{rank} (T_X)/\phi (n)-1$, where $\phi$ is the Euler's function. 
Since we have $\mathrm{rank} (T_X)\leq 12$, 
we see that $\dim \mathcal{D}$ is at most 
$m=5$ (resp. $m=2$) for $n=4$ (resp. $n=8$). 
Moreover, if the equality holds, then $T_X$ is of rank $12$ and $\rho (X)=10$ follows.
This is the case for a very general member in an $m$-dimensional family.

In case $(n,I)=(8,2)$, the lift $\varphi$ is a non-symplectic automorphism of 
order $8$ and index 4. Let $\varepsilon$ be the 
covering involution of $X/S$. Since the eigenvalues of $\varphi$ on $T_X\otimes \C$ 
are 4-th roots of unity, the Picard number of $X$ is even. On the other hand, since $\varphi^2\varepsilon$ is 
a symplectic automorphism of order $4$, $X/\varphi^2\varepsilon$ has four $A_3$ 
and two $A_1$ singularities. Thus, by also including an ample divisor class,
we see $\rho (X)\geq 15$. 
Therefore $\rho(X)\geq 16$ and we obtain $\mathrm{rank} (T_X)\leq 6$. 
As in the previous cases, the period domain of $(X,\varphi)$ is a 
projectivized $\zeta_4$-eigenspace, hence has dimension at most $6/\varphi(4)-1=2$.
As before, if $(S,\s)$ is a very general member of a $2$-dimensional family, 
then its $K3$-cover has Picard number $16$. \\

Finally we note that the $K3$-cover of our Enriques surface $S$ is 
obtained as follows (\cite{horikawa1}). 
Let $X$ be the pullback of the double cover $\overline{S}\rightarrow Q$ by the morphism
\[\pi \colon Q\rightarrow Q,\ (Y,Z)\mapsto (y,z)=(YZ,Z^2).\]
If $f(y,z)=0$ is the defining equation of $B_0$ and $w^2=zf(y,z)$ that of $\overline{S}$, 
then $X$ has the equation 
\[W^2=g(Y,Z),\ \text{ where $g(Y,Z)=f(YZ,Z^2)/Z^4$ and $W=w/Z^3$.}\]
The divisor $\{g(Y,Z)=0\}\subset Q$ is of bidegree $(4,4)$ and is invariant under the 
involution $\iota\colon (Y,Z)\mapsto (-Y,-Z)$. The covering involution $\varepsilon$
of $X \rightarrow S$ is nothing but the lift of $\iota$, namely
$(W,Y,Z)\mapsto (-W,-Y,-Z)$. This is another, probably more familiar construction of the same 
Enriques surface from the paper
\cite{horikawa1}. We can rewrite our examples by using the $K3$-cover $X\rightarrow Q$ 
rather than $\overline{S}\rightarrow Q$. 
Our choice of the description in the introduction was because it is concise and direct.\\

\noindent {\bf{Example \ref{(4,2)} (resp. \ref{(8,4)}) bis:}} Let $B\colon \{g(Y,Z)=0\}$ be a divisor of bidegree 
$(4,4)$ on $Q$ satisfying the conditions
\begin{equation}\label{bis}
\begin{split}
 & g(-Y,-Z)=g(Y,Z),\ \text{ and }Y^4Z^4g(1/Y,1/Z)=-g(Y,Z). \\
(\text{resp. } & g(-Y,-Z)=g(Y,Z),\ \text{ and }Z^4g(1/Z,Y)=\sqrt{-1}g(Y,Z)).
\end{split}
\end{equation}
Then the double cover $X$ of $Q$ branched along $B$ is a $K3$ surface, whose 
affine equation is given by $W^2=g(Y,Z)$. It is equipped with 
a fixed-point-free involution $\varepsilon \colon (W,Y,Z)\mapsto (-W,-Y,-Z)$ 
if $g$ is general. The additional automorphism
\begin{equation*}
\begin{split}
& \varphi \colon (W,Y,Z)\mapsto (\sqrt{-1}W/Y^2Z^2, 1/Y,1/Z) \\
(\text{resp. } & \varphi \colon (W,Y,Z)\mapsto (\zeta_8 W/Z^2, 1/Z,Y)
\end{split}
\end{equation*}
commutes with $\varepsilon$, hence descends to the Enriques surface $S=X/\varepsilon$.
This gives a non-semi-symplectic automorphism of order $4$ and index 2 (resp. order 8 and 
index 4). \\

This form of the example will be used in Section \ref{characterization}.
We have a similar interpretation in Example \ref{(8,2)}, too. We omit the details.

\section{Classification}\label{classification}

In this section we prove Theorem \ref{clsn}.
Let $\s$ be an arbitrary finite automorphism of an Enriques surface $S$.
We start with recalling the following observations.
\begin{prop}\label{recall}
\begin{enumerate}
\item If $\mathrm{ord} (\s)$ is $2$ or an odd number, then it is automatically semi-symplectic.
\item Finite semi-symplectic automorphisms exist only up to order $6$.
\end{enumerate}
\end{prop}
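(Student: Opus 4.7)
The natural approach is to pass to the K3-cover $\pi\colon X\to S$ with fixed-point-free covering involution $\varepsilon$, and lift $\sigma$ to some $\varphi\in\mathrm{Aut}(X)$ commuting with $\varepsilon$. Writing $\varphi^*\omega_X=\mu\,\omega_X$ for the eigenvalue on the holomorphic $2$-form $\omega_X$, we have $\pi^*\Omega_S=\omega_X^{\otimes 2}$ and $\varepsilon^*\omega_X=-\omega_X$, so the index satisfies $I(\sigma)=\mathrm{ord}(\mu^2)$; note that $\mu$ is defined only up to sign, since $\varphi\varepsilon$ also lifts $\sigma$.

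For part (1), case $\mathrm{ord}(\sigma)=2$: if $\sigma$ were non-semi-symplectic then $\mu^2=-1$, so $\mu=\pm i$ and $\varphi^2=\varepsilon$, making $\varphi$ of order $4$. Then $\mathrm{Fix}(\varphi)\subseteq\mathrm{Fix}(\varepsilon)=\emptyset$, so $\varphi$ is fixed-point-free; but the holomorphic Lefschetz formula yields $L(\varphi)=1+\overline{\mu}=1\mp i\neq 0$, contradicting the vanishing forced by the absence of fixed points. Case $\mathrm{ord}(\sigma)=n$ odd: one can pick $\varphi$ of order exactly $n$ (replacing by $\varphi\varepsilon$ if $\varphi^n=\varepsilon$), so $\mu^n=1$. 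Because squaring is a bijection on the $n$-th roots of unity for odd $n$, $\mu^2=1$ iff $\mu=1$, and the task reduces to ruling out a non-symplectic automorphism of odd order $n>1$ on a K3 surface commuting with a fixed-point-free involution. I expect this to be the main obstacle. My plan is to invoke the classification of non-symplectic K3 automorphisms of (prime) odd order together with the constraint $\varepsilon|_{T_X}=-\mathrm{id}$, and to analyze how $\varepsilon$ must act on $\mathrm{Fix}(\varphi)$ (a disjoint union of smooth curves and isolated points): one should show that at least one component of $\mathrm{Fix}(\varphi)$ is necessarily stable under $\varepsilon$ in a way that forces an $\varepsilon$-fixed point, contradicting fixed-point-freeness.

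For part (2), after replacing $\varphi$ by $\varphi\varepsilon$ if necessary, we may assume $\mu=1$, so that $\varphi$ is a symplectic K3 automorphism of order $n=\mathrm{ord}(\sigma)$. Nikulin's classification then gives $n\in\{1,2,3,4,5,6,7,8\}$, and we need to eliminate $n=7$ and $n=8$. For $n=7$: a symplectic order-$7$ automorphism has exactly three isolated fixed points, on which the commuting $\varepsilon$ must act; an involution on a $3$-element set necessarily fixes at least one element, contradicting $\mathrm{Fix}(\varepsilon)=\emptyset$. For $n=8$: the symplectic order-$8$ automorphism has only two isolated fixed points, which $\varepsilon$ may swap, so the contradiction is more subtle; my plan is to analyze the action of $\langle\varphi,\varepsilon\rangle$ on the larger fixed loci $\mathrm{Fix}(\varphi^2)$ and $\mathrm{Fix}(\varphi^4)$, and combine this with Mukai's description of the coinvariant lattice of a symplectic order-$8$ action to show that no K3 surface simultaneously carries such a $\varphi$ and a commuting fixed-point-free involution.
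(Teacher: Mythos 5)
The paper's own ``proof'' is just a citation to \cite{MO_mathieu}, so you are necessarily taking a different route by arguing from scratch; your skeleton (lift to the $K3$-cover, separate the symplectic and non-symplectic lifts, use Nikulin's bound on symplectic orders and play fixed-point counts against the freeness of $\varepsilon$) is the natural one and your order-$2$ case and order-$7$ exclusion are correct and complete. However, two of the four cases are only plans, and one of them is headed in an unnecessarily hard direction. For the odd-order case of (1), the ``main obstacle'' you anticipate dissolves if you reuse the very Lefschetz trick you already applied for order $2$: having normalized $\varphi^n=\mathrm{id}$, the other lift $\varphi\varepsilon$ satisfies $(\varphi\varepsilon)^n=\varepsilon$ because $n$ is odd, so $\mathrm{Fix}(\varphi\varepsilon)\subseteq\mathrm{Fix}(\varepsilon)=\emptyset$ and the holomorphic Lefschetz formula gives $0=L(\varphi\varepsilon)=1-\overline{\mu}$, forcing $\mu=1$ directly. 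No classification of odd-order non-symplectic automorphisms or analysis of their fixed curves is needed, and the route you propose instead (reduce to prime order, invoke the classification, show some component of $\mathrm{Fix}(\varphi)$ carries an $\varepsilon$-fixed point) is not carried out and would be considerably longer.

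The second gap is the order-$8$ exclusion in (2), where you correctly identify that the two fixed points of a symplectic automorphism of order $8$ can simply be swapped by $\varepsilon$, but what you offer in its place is only a program. Note that the obvious extension of the parity argument fails: $\mathrm{Fix}(\varphi^2)$ and $\mathrm{Fix}(\varphi^4)$ consist of $4$ and $8$ points respectively, and a free involution commuting with $\varphi$ can a priori act on these sets without fixed points, so no contradiction falls out of counting alone. Ruling this case out genuinely requires extra input (lattice-theoretic constraints comparing the rank-$18$ coinvariant lattice of a symplectic order-$8$ action with the rank-$12$ anti-invariant lattice of an Enriques involution, or a finer local analysis at the fixed points of the powers of $\varphi$), and none of that is supplied. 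As it stands, part (2) is proved only for the exclusion of order $7$; the order-$8$ case, which is the harder half, remains open in your write-up, so the proposal is incomplete rather than wrong.
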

\begin{proof} See \cite{MO_mathieu}.\end{proof}
\begin{prop}
For a finite non-semi-symplectic automorphism $\s$, the index $I(\s)$ is a power of $2$.
\end{prop}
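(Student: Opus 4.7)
The plan is to reduce to Proposition \ref{recall}(1) by extracting the ``odd part'' of $\sigma$. Write $n = \mathrm{ord}(\s) = 2^a m$ with $m$ odd. Since $\sigma$ is assumed non-semi-symplectic, Proposition \ref{recall}(1) already rules out $n$ odd, so $a \geq 1$. Consider the power $\tau := \s^{2^a}$, which has order $m$, an odd number.

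Now I would apply Proposition \ref{recall}(1) to $\tau$: being of odd order (or trivial, if $m=1$), $\tau$ is semi-symplectic, which by definition means it acts as the identity on $H^0(\mathcal{O}_S(2K_S))$. Let $\zeta \in \C^*$ be the scalar by which $\s$ acts on this one-dimensional space, so that $\zeta$ is a primitive $I$-th root of unity by the definition of the index. Then $\zeta^{2^a} = 1$ from the triviality of $\tau$'s action, which forces $I \mid 2^a$, and hence $I$ is a power of $2$.

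The argument is essentially a one-line reduction, so there is no real obstacle here; the only point to check is that Proposition \ref{recall}(1) indeed applies to the trivial automorphism as well (which is immediate, since the identity acts trivially everywhere). This proposition is the conceptual core: the statement that the bi-canonical character factors through the $2$-Sylow quotient of $\langle \s \rangle$ is exactly what ``odd-order automorphisms are semi-symplectic'' says.
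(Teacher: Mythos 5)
Your argument is correct and is essentially identical to the paper's proof: both decompose $n=2^a m$ with $m$ odd, note that $\s^{2^a}$ has odd order and is therefore semi-symplectic by Proposition \ref{recall}(1), and conclude that the bi-canonical character has order dividing $2^a$. Your write-up merely spells out the same reduction in slightly more detail.
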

\begin{proof}
Let $n=2^a \cdot b$ be the order of $\s$, decomposed into a $2$-power $2^a$ and 
an odd number $b$. By Proposition \ref{recall} (1), $\s^{2^a}$ is semi-symplectic. Therefore,
the bi-canonical representation is at most $2^a$-th root of unity.
\end{proof}

\noindent {\bf{Proof of Theorem \ref{clsn}.}}
\noindent {\bf{Step 1: $I=2$.}} First let us assume $\s$ has index $I=2$. Then
$\s^2$ is semi-symplectic. By Proposition \ref{recall} (2), 
$\s^2$ has order either $1,\dots$, or $6$.
However, Proposition \ref{recall} (1) excludes odd numbers.
The Examples \ref{(4,2)} and \ref{(8,2)} confirm the existence of $\mathrm{ord} (\s^2)=2,4$,
so in what follows we assume that $\tau = \s^2$ has order 6 and derive a contradiction.
Here, the semi-symplectic automorphism $\tau$ has a unique {\em{symplectic}} 
fixed point $P\in S$.
It is characterized among fixed points by the property that the local action 
$(d\tau)_P$ has $\det (d\tau)_P=1$. See the discussion before Proposition 3.8 in \cite{MO_mathieu}. Hence $\s$ also fixes $P$, $\s(P)=P$. 
From the original assumption $I=2$ we deduce that $(\det (d\s)_P)^2=-1$, but this is 
a contradiction because $(\det (d\s)_P)^2=\det (d{\tau})_P$.
Summarizing, we have only cases $(\mathrm{ord}(\s), I(\s))=(4,2)$ or $(8,2)$.\\

\noindent {\bf{Step 2: $I=4$.}} Let us assume $I=4$. By Step 1, we have $\mathrm{ord} (\s^2)$  
is either $4$ or $8$. Since Example \ref{(8,4)} confirms the existence of the former, 
let us assume $\mathrm{ord} (\s^2)=8$ and deduce a contradiction.
In this case, $\tau=\s^4$ is a semi-symplectic automorphism of order 4. 
Therefore it has exactly two symplectic fixed points $P_1,P_2$. 
Since $\s$ preserve $\{P_1,P_2\}$, $\s^2$ fixes both $P_i$. But then we have 
$(\det (d\s^2)_{P_i})^2=-1$ since $\s^2$ has index $2$, while $\det (d\tau)_{P_i}=1$ since
$P_i$ is a symplectic fixed point, a contradiction.
Thus we have only the case $(\mathrm{ord}(\s), I(\s))=(8,4)$ here.\\

\noindent {\bf{Step 3: $I=8$.}} 
In this case, we can only have $\mathrm{ord} (\s^2)=8$ by Step 2. 
Let $\tau = \s^8$ be the semi-symplectic power. Then the four isolated fixed points 
$P_i\ (i=1,\dots,4)$ of 
$\tau$ constitute exactly the symplectic fixed points. Since $\s$ preserves them,
$\s^4$ fixes all $P_i$. As before, we have 
$(\det (d\s^4)_{P_i})^2=-1$ since $\s^4$ has index $2$, while $\det (d\tau)_{P_i}=1$ since
$P_i$ is a symplectic fixed point, a contradiction. So, there are no finite non-semi-symplectic 
automorphisms of index 8, or higher.\\

Corollary \ref{orders} is an easy consequence since in the semi-symplectic case 
we have orders up to 6 by Proposition \ref{recall} (2), while for the non-semi-symplectic case
we have only orders 4 and 8 by Theorem \ref{clsn}.

\section{characterization}\label{characterization}

In this section we prove Theorem \ref{chrn}.\\

\noindent {\bf{Step 1:}} First let $S$ have an automorphism $\s$ with $(\mathrm{ord}(\s), I(\s))=(4,2)$. 
We assume that the $K3$-cover $X$ has Picard number $\rho (X)=10$. 
The lift $\varphi$ of $\s$ is an automorphism of order $4$ and index $4$, which 
we normalize so that $\varphi^*$ acts on $H^0(\mathcal{O}_X(K_X))$ by $\sqrt{-1}$. 
Let $\varepsilon$ be the covering transformation of the double covering $X/S$. 
\begin{lem}\label{18}
We have $H^2(X,\Z)^{\varphi^2}\simeq U(2)$ as lattices. Also we have 
$X^{\varphi^2}= C^{(9)}$, where $C^{(9)}$ is a smooth curve of genus $9$.
\end{lem}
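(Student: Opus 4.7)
The plan is to identify $\varphi^2$ as a non-symplectic involution on $X$, determine the rank and isomorphism type of the invariant lattice $L^+ := H^2(X,\Z)^{\varphi^2}$ via lattice-theoretic constraints arising from the triple of commuting involutions $\varphi^2,\ \varepsilon,\ \varphi^2\varepsilon$, and finally invoke Nikulin's classification of fixed loci of non-symplectic involutions on K3 surfaces.

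First, since $\varphi^*$ acts on $H^0(\mathcal{O}_X(K_X))$ by $\sqrt{-1}$, the square $(\varphi^2)^*$ acts by $-1$, so $\varphi^2$ is a non-symplectic involution. It acts as $-\mathrm{id}$ on the transcendental lattice $T_X$, so $T_X\subseteq L^-$ and $L^+ \subseteq \Pic(X)$; under the hypothesis $\rho(X) = 10$ this yields $\mathrm{rank}(L^+) \leq 10$. Moreover the order-$4$ automorphism $\varphi$ restricts to $L^-$ with $\varphi^2|_{L^-} = -\mathrm{id}$, equipping $L^- \otimes \Q$ with a $\Q(\sqrt{-1})$-module structure; hence $\mathrm{rank}(L^-)$, and so $\mathrm{rank}(L^+)$, is even.

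Next, combine with the Enriques involution $\varepsilon$ and the symplectic (Nikulin) involution $\varphi^2\varepsilon$. Decomposing $H^2(X,\Z)\otimes\Q$ into joint eigenspaces $L^{\pm\pm}$ under $(\varphi^2,\varepsilon)$ and using $\mathrm{rank}(L^+_\varepsilon) = 10$ together with $L^-_{\varphi^2\varepsilon} \simeq E_8(-2)$ of rank $8$ expresses each joint rank in terms of $r := \mathrm{rank}(L^+)$. Requiring that $L^{-+}$ and $L^{--}$ both have even rank (from the $\Q(\sqrt{-1})$-structure induced by $\varphi$ on the loci where $\varphi^2 = -\mathrm{id}$) forces $r \equiv 2 \pmod{4}$, so $r \in \{2,6,10\}$. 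The case $r = 10$ is excluded because $\varphi$ of order $4$ non-symplectic has non-empty fixed locus, hence so does $\varphi^2$, whereas $(r,a,\delta) = (10,10,0)$ would give a fixed-point-free involution. The case $r = 6$ is excluded by a closer analysis of the decomposition of $L^+_\varepsilon \simeq U(2)\oplus E_8(-2)$ under $\varphi^2$: the rank-$2$ $\varphi^2$-anti-invariant piece would have to be a primitive sublattice of $L^+_\varepsilon$ compatible both with the $\Q(\sqrt{-1})$-structure from $\varphi$ and with the constraint $L^-_{\varphi^2\varepsilon} \simeq E_8(-2)$, and no such embedding exists. Hence $r = 2$.

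With rank $2$, $L^+$ is an even, hyperbolic, $2$-elementary lattice; the three candidates are $U$, $U(2)$, and $\langle 2\rangle\oplus\langle-2\rangle$. Since $\rho(X) = 10$ implies $S$ is unnodal, the fixed locus of $\varphi^2$ contains no smooth rational curve, forcing $r = a$ in Nikulin's invariants and excluding $U$ (for which $a = 0$). Between $U(2)$ and $\langle 2\rangle\oplus\langle-2\rangle$, the $U(2)$ option (with $\delta = 0$) is singled out by the requirement that $L^+$ embed primitively into $L^+_\varepsilon \simeq U(2)\oplus E_8(-2)$ with compatible discriminant form. Finally, Nikulin's description of fixed loci for non-symplectic involutions with invariants $(r,a,\delta) = (2,2,0)$ gives exactly a single smooth curve of genus $g = (22-r-a)/2 = 9$ and $k = (r-a)/2 = 0$ rational curves, so $X^{\varphi^2} = C^{(9)}$. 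The main obstacle is the fine rank determination in the second paragraph, distinguishing $r = 2$ from the other admissible values $6$ and $10$; this requires careful bookkeeping of the three commuting involutions on $H^2(X,\Z)$ and essential use of the hypothesis $\rho(X) = 10$.
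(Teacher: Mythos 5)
Your overall strategy --- viewing $\varphi^2$ as a non-symplectic involution, playing it off against the Enriques involution $\varepsilon$ and the Nikulin involution $\varphi^2\varepsilon$, and finishing with Nikulin's table of invariants $(r,a,\delta)$ and fixed loci --- is viable and genuinely different from the paper's, which simply notes that the sublattice $K_-=L^{--}$ on which $\varepsilon$ and $\varphi^2$ both act by $-1$ contains $T_X$, hence has rank at least $12$, and reads off everything from type No.~18 of the Ito--Ohashi classification of involutions on Enriques surfaces. However, your argument has a genuine gap exactly where you flag ``the main obstacle'': the elimination of $r=6$ and $r=10$. For $r=10$ you only rule out the single invariant type $(r,a,\delta)=(10,10,0)$, with no justification that $a=10$ and $\delta=0$ are forced; and for $r=6$ the decisive claim that ``no such embedding exists'' is asserted, not proved. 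As written, the rank determination is incomplete.

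The gap closes easily, in a way that makes the whole congruence-mod-$4$ discussion unnecessary. Since $\varepsilon$ is non-symplectic with invariant lattice of rank $10=\rho(X)$, the hypothesis forces $H^2(X,\Z)^{\varepsilon}=\Pic(X)$. As $\varphi^2$ is also non-symplectic, $L^+:=H^2(X,\Z)^{\varphi^2}\subseteq\Pic(X)=H^2(X,\Z)^{\varepsilon}$, so $\varepsilon$ acts trivially on $L^+$ and the joint eigenspace $L^{+-}$ vanishes; but the three rank relations you set up ($r^{++}+r^{+-}=r$, $r^{++}+r^{-+}=10$, $r^{+-}+r^{-+}=8$) give $r^{+-}=(r-2)/2$, whence $r=2$ at once. (Equivalently, $T_X\subseteq L^{--}$ of rank $(26-r)/2\geq 12$, which is in effect the paper's argument.) Your endgame is then fine: unnodality at $\rho(X)=10$ kills the $k=(r-a)/2$ rational components of the fixed locus, forcing $a=2$ and excluding $U$; and $\langle 2\rangle\oplus\langle -2\rangle$ is excluded because every vector of $\Pic(X)\simeq U(2)\oplus E_8(-2)$ has square divisible by $4$, so no class of square $2$ exists. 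That leaves $(r,a,\delta)=(2,2,0)$, i.e.\ $L^+\simeq U(2)$ and a single fixed curve of genus $(22-r-a)/2=9$.
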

\begin{proof}
In \cite{IO} we have classified involutions  
on Enriques surfaces into 18 types.
We apply it to $\s^2$. As in \cite{IO}, we denote by $K_-\subset H^2(X,\Z)$ the sublattice 
on which $\varepsilon$ and the non-symplectic involution $\varphi^2$ both act
by $-1$. Under our assumption $\rho (X)=10$, we get
\[\mathrm{rank} K_-\geqq \mathrm{rank} T_X=12.\]
Hence $\s^2$ belongs to No. 18. of \cite{IO}. The lemma follows.
\end{proof}
We use the following well-known lemma.
\begin{lem}
Let $X$ be a $K3$ surface with $\tau$ an involution. 
If $H^2(X,\Z)^{\tau}\simeq U(2)$, then one of the following holds.
\begin{itemize}
\item The quotient $X/\tau\simeq \P^1\times \P^1$. 
\item The quotient $X/\tau$ is isomorphic to the Hirzebruch surface $\mathbb{F}_2$ 
of degree 2 and $X$ has a smooth rational curve.
\end{itemize}
\end{lem}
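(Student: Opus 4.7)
The plan is to study the quotient $Y:=X/\tau$, identify it as a Hirzebruch surface, and pin down its degree. First, $U(2)$ has signature $(1,1)$, so the invariant part of $H^2(X,\C)$ cannot contain $H^{2,0}(X)$; hence $\tau^*\omega_X=-\omega_X$ and $\tau$ is non-symplectic. Its fixed locus is then a (possibly empty) smooth divisor, and locally $\tau$ acts as $(x,y)\mapsto(x,-y)$ near a fixed curve, so $Y$ is a smooth projective surface. The equivariant transfer yields $H^*(Y,\Q)\simeq H^*(X,\Q)^\tau$, whence $h^{0,1}(Y)=h^{0,2}(Y)=0$ and $b_2(Y)=2$. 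By Castelnuovo and the Enriques--Kodaira classification, $Y$ is a smooth rational surface with Picard number $2$, hence isomorphic to some Hirzebruch surface $\mathbb{F}_n$ ($n\geq 0$), with $\mathbb{F}_0=\P^1\times\P^1$.

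Second, I would pin down $n$ lattice-theoretically. Since $\tau$ is non-symplectic, $H^2(X,\Z)^\tau\subset H^{1,1}(X)$ is entirely algebraic, so $\mathrm{Pic}(X)^\tau=U(2)$, of discriminant $4$. The pullback $\pi^*\colon\mathrm{Pic}(\mathbb{F}_n)\hookrightarrow\mathrm{Pic}(X)^\tau$ doubles intersection numbers, so in the standard basis $(s,f)$ with $s^2=-n$ it has image Gram matrix $\begin{pmatrix}-2n & 2\\ 2 & 0\end{pmatrix}$, again of discriminant $4$. Hence $\pi^*\mathrm{Pic}(\mathbb{F}_n)=\mathrm{Pic}(X)^\tau$ as sublattices, and the two must be isomorphic as abstract lattices. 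Looking for a primitive isotropic vector $a\,\pi^*s+b\,\pi^*f$, one sees this holds iff $n$ is even: for $n$ odd the minimum pairing between two primitive isotropic vectors is $4$ rather than $2$. Thus $n=2k$.

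Third, I would rule out $k\geq 2$ using the fixed locus. The invariants of $U(2)$ are $(r,a,\delta)=(2,2,0)$, so Nikulin's formulas (equivalently, Lemma~\ref{18} in this paper) give $\mathrm{Fix}(\tau)$ as a single smooth irreducible curve of genus $9$. Its image $B=\pi(\mathrm{Fix}(\tau))\in|-2K_{\mathbb{F}_n}|$ is therefore irreducible. But $B\cdot s=-2n+4<0$ for $n\geq 4$, forcing $s\subset B$ and rendering $B$ reducible, a contradiction. So $n\in\{0,2\}$. For $n=2$, $B\cdot s=0$ again rules out $s\subset B$, leaving $B\cap s=\emptyset$; then $\pi^{-1}(s)$ is an \'etale double cover of $\P^1\simeq s$ and splits as $\P^1\sqcup\P^1$, yielding two smooth rational curves on $X$. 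The main obstacle is the lattice isomorphism analysis in the second step, specifically the even/odd dichotomy; the remaining steps follow standardly from the double-cover formalism and Nikulin's classification of fixed loci of non-symplectic involutions.
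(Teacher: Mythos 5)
Your proof is correct, but there is nothing in the paper to compare it with: the author states this lemma as ``well-known'' and gives no proof, implicitly referring to Nikulin's theory of non-symplectic involutions on $K3$ surfaces (where the quotient by an involution with invariant lattice $U(2)$, invariants $(r,a,\delta)=(2,2,0)$, is classically known to be $\P^1\times\P^1$ or $\mathbb{F}_2$ branched over a smooth bidegree-$(4,4)$, resp.\ anticanonical, curve of genus $9$). Your argument supplies exactly the standard proof, and all the key computations check out: the signature argument forces $\tau$ to be non-symplectic; the transfer gives $b_2(Y)=2$, $q=p_g=0$, so $Y\simeq\mathbb{F}_n$; the index computation $\det\bigl(\begin{smallmatrix}-2n&2\\2&0\end{smallmatrix}\bigr)=-4=\det U(2)$ forces $\pi^*\Pic(\mathbb{F}_n)=H^2(X,\Z)^\tau$; and the parity obstruction is genuine (for $n$ odd the vector $s+\tfrac{n+1}{2}f$ pulls back to a vector of square $2$, which $U(2)$ does not represent), so $n$ is even --- this also disposes of $\mathbb{F}_1$, the one non-minimal rational surface with $\rho=2$. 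Two points deserve one explicit sentence each: first, before invoking Castelnuovo you should note that the fixed locus is nonempty (the free case would make $Y$ an Enriques surface, excluded by $b_2(Y)=2$, or by Nikulin since the invariant lattice of an Enriques involution is $U(2)\oplus E_8(2)$), since $P_2(Y)=0$ is deduced from $\pi^*(2K_Y)=-2R$ with $R$ the nonzero ramification divisor rather than from $p_g=0$ alone; second, in the $n=2$ case the splitting of $\pi^{-1}(s)$ into two $(-2)$-curves uses simple connectedness of $s\simeq\P^1$, which you correctly invoke. The final step (irreducibility of $B=\pi(\mathrm{Fix}(\tau))$ versus $B\cdot s=-2n+4<0$ for $n\geq 4$) is clean and rules out all higher Hirzebruch surfaces.
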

Since the Picard number $\rho(X)$ is 10, $X$ has no smooth rational curves. Hence 
we see that $X/\varphi^2\simeq \P^1\times \P^1=Q$. The branch $B$ is a smooth curve of 
bidegree $(4,4)$ on $Q$, isomorphic to $C^{(9)}$.
The residue group $K_4=\bracket{\varphi, \varepsilon}/\varphi^2$ acts linearly on $Q$
preserving $B$. In what follows, we will determine the normal form of this action. 
\begin{lem}
(1) The group $K_4$ acts trivially on the Neron-Severi group $NS(Q)$. \\
(2) Non-trivial elements of $K_4$ are small, namely have only finitely many fixed points.
\end{lem}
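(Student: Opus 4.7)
The plan is to deduce both (1) and (2) from the constraint $\rho(X)=10$, together with a Lefschetz fixed-point count on $X$ and the fact that $\varepsilon$ is fixed-point-free. For (1), I treat the images of $\varepsilon$ and $\varphi$ in $K_4$ separately. Since $\rho(X)=10$, the transcendental lattice $T_X$ has rank $12$, which fills up the entire $\varepsilon$-anti-invariant summand of $H^2(X,\Z)$; hence $NS(X)^{-}_\varepsilon=0$ and $\varepsilon$ acts trivially on $NS(X)$, in particular on $\pi^*NS(Q)\subset H^2(X,\Z)^{\varphi^2}=U(2)$ by Lemma \ref{18}. So the image of $\varepsilon$ acts trivially on $NS(Q)$. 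The image of $\varphi$ is an involution of $Q$, and its induced action on $U(2)$ is an isometric involution preserving the ample class, hence either the identity or the swap of the two isotropic generators.

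To rule out the swap case, assume it occurs. Then the image of $\varphi$ on $Q$ has the form $(y,z)\mapsto(\alpha(z),\alpha^{-1}(y))$ for some $\alpha\in PGL_2$, with fixed locus a smooth $(1,1)$-curve $\Delta\simeq\P^1$ meeting $B$ in $8$ points. The preimage $\pi^{-1}(\Delta)$ is a connected curve on $X$; if it were contained in $X^\varphi\subset X^{\varphi^2}=C^{(9)}$, then, as $C^{(9)}$ is a smooth irreducible curve, we would have $\pi^{-1}(\Delta)=C^{(9)}$ set-theoretically, but $\pi(C^{(9)})=B$ has bidegree $(4,4)$ while $\Delta$ has bidegree $(1,1)$, a contradiction. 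Therefore $\varphi$ acts non-trivially on $\pi^{-1}(\Delta)$, swaps the two sheets of $\pi^{-1}(\Delta)\to\Delta$, and fixes exactly the $8$ ramification points, giving $\chi(X^\varphi)\geq 8$. On the other hand, the Lefschetz formula gives
\[
L(\varphi)=2+\mathrm{tr}(\varphi^{*}|H^{2}(X,\Q)).
\]
Because $\varphi^2$ is of type No.\ 18 in \cite{IO}, it acts as $-1$ on $T_X$ and on the rank-$8$ sublattice $NS(X)^{-}_{\varphi^2}$; on both, $\varphi$ has eigenvalues $\pm\sqrt{-1}$ in complex-conjugate pairs, with trace zero. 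The swap also has trace zero on $U(2)$. Hence $L(\varphi)=2$, contradicting $\chi(X^\varphi)\geq 8$. So the image of $\varphi$ acts trivially on $NS(Q)$, and the case of $\varphi\varepsilon$ follows.

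For (2), each non-trivial element of $K_4$ now acts on $Q$ as $(y,z)\mapsto(\alpha(y),\beta(z))$ with $\alpha,\beta\in PGL_2$ of order dividing $2$, and its fixed locus is finite precisely when both $\alpha$ and $\beta$ are non-trivial involutions. For the image of $\varepsilon$: if one factor were trivial, the fixed locus on $Q$ would contain two fibers of the other projection meeting $B$ in $8$ points, which would be fixed by $\varepsilon|_{C^{(9)}}$, contradicting the fixed-point-freeness of $\varepsilon$. For the image of $\varphi$ (and analogously $\varphi\varepsilon$): the same geometric configuration, together with the preimage-curve genus obstruction above applied now to each $(1,0)$- or $(0,1)$-fiber, forces $\chi(X^\varphi)\geq 8$, while the Lefschetz formula now yields $L(\varphi)=4$ because the identity on $U(2)$ contributes trace $2$; contradiction again. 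Hence in every case both $\alpha$ and $\beta$ are non-trivial involutions and the fixed locus is the $4$-point set $\{\alpha\text{-fixed}\}\times\{\beta\text{-fixed}\}$.

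The main obstacle is the geometric step ruling out $\pi^{-1}(\Delta)\subset X^\varphi$: this genus comparison between a double cover of a low-bidegree rational curve in $Q$ and the smooth irreducible curve $C^{(9)}$ drives both the lower bound $\chi(X^\varphi)\geq 8$ and the exclusions in part (2). Once this is in place, the Lefschetz trace computation proceeds routinely from the lattice data of Lemma \ref{18}.
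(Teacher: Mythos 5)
Your route is genuinely different from the paper's: you never invoke the holomorphic Lefschetz fixed point formula, and instead run a case analysis on the possible actions on $U(2)\simeq NS(Q)$ (identity versus swap; which fibers are fixed), compare the topological Lefschetz number $L(\varphi)\in\{2,4\}$ computed from the lattice data of Lemma \ref{18} with a fixed-point count read off from the fixed curve of the induced map on $Q$, and derive contradictions. The paper instead applies the holomorphic Lefschetz formula to $\tau=\varphi$ and $\varepsilon\varphi$ (using $\tau^2=\varphi^2$, so $X^\tau$ is either $C^{(9)}$ or finite), which forces $X^\tau$ to consist of exactly $N=4$ points; the topological formula then gives trace $2$ on $U(2)$, i.e.\ the trivial action, and both (1) and (2) drop out at once. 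Your lattice-theoretic preliminaries (triviality of $\varepsilon$ on $NS(X)$ when $\rho=10$; trace $0$ of $\varphi$ on the rank-$20$ anti-invariant part of $\varphi^2$; the dichotomy identity/swap on $U(2)$) are all correct.

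The gap is in the fixed-point count on $X$. You assert that in the swap case the $(1,1)$-curve $\Delta$ meets $B$ in $8$ points and hence $\chi(X^\varphi)\geq 8$, and similarly that each fixed fiber contributes $4$ ramification points in part (2). But $\Delta\cdot B=8$ (resp.\ $F_i\cdot B=4$) is only an intersection number; set-theoretically $X^\varphi$ equals the points of $C^{(9)}$ lying over $(\Delta\cap B)_{\mathrm{red}}$, which could a priori consist of as few as $2$ points (resp.\ $4$ points total) if there are tangencies. Since $L(\varphi)=2$ in the swap case and $L(\varphi)=4$ in the fiber case, these are exactly the degenerate counts that would destroy your contradiction, so the transversality of $\Delta$ (or of the fixed fibers) with $B$ cannot be assumed silently. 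The gap is repairable: at a tangency point $q$ one has $T_qB=T_q\Delta$ inside the $(+1)$-eigenspace of $d\bar\varphi_q$, which forces $d\varphi$ to act by $+1$ on $T_xC^{(9)}$ at the point $x$ over $q$, hence $X^\varphi\supset C^{(9)}$ and $\chi(X^\varphi)=-16$, still contradicting $L(\varphi)\in\{2,4\}$; if all intersections are transversal one gets $8$ distinct points as you claim. (Relatedly, your phrase ``contradicting $\chi(X^\varphi)\geq 8$'' should be ``$\chi(X^\varphi)\in\{8,-16\}$, neither of which equals $L(\varphi)$'', since $X^\varphi=C^{(9)}$ is not excluded by the $\Delta$-argument alone.) The paper's use of the holomorphic Lefschetz formula sidesteps all of this by pinning down $N=4$ exactly, independent of the geometry of $B$; if you want to keep your more elementary topological argument, you must add the tangency analysis above.
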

\begin{proof}
Let $\tau$ denote either $\varphi$ or $\varepsilon \varphi$. Both
satisfy $\tau^2=\varphi^2$. By Lemma \ref{18}, the fixed point set $X^{\tau}$ is 
either (a) $=C^{(9)}$ or (b) a finite set consisting of $N$ points ($N\geqq 0$).
Applying the holomorphic Lefschetz theorem to $\tau$ gives the equality
\begin{equation*}
1-(\pm \sqrt{-1})= \begin{cases}
\frac{1-9}{1-(\pm \sqrt{-1})}-\frac{\pm 16\sqrt{-1}}{(1-(\pm \sqrt{-1}))^2} &(\text{in case (a)}),\\
\frac{N}{(1+1)(1\pm \sqrt{-1})} &(\text{in case (b)}),
\end{cases}
\end{equation*}
where $\pm$ is according to $\tau = \varphi$ or $\varepsilon \varphi$. 
The 1st line is untrue, hence we see that $X^{\tau}$ consists of $N=4$ points.
The topological Lefschetz formula then gives that $\tau$ acts on the 
lattice $H^2(X,\Z)^{\varphi^2}\simeq U(2)$ trivially.

The statement (1) follows since the residues of $\tau=\varphi, \varepsilon \varphi$ 
generate $K_4$ and $NS(Q)$ is pulled back onto $H^2(X,\Z)^{\varphi^2}$.
Similarly (2) follows since, the non-trivial representatives $\varphi^{\pm 1}, (\varepsilon \varphi)^{\pm1}$ and $\varepsilon, \varepsilon \varphi^2$ all have finitely many fixed points
by the above arguments.
\end{proof}
By (1) of the lemma, $K_4$ is a subgroup of $\mathrm{Aut}(\P^1)\times \mathrm{Aut}(\P^1)
\subset \mathrm{Aut}(Q)$
and by (2) both projections $K_4\rightarrow \mathrm{Aut}(\P^1)$ are injective. 
It is well-known that all subgroups of $PGL(2,\C)$ isomorphic to $K_4\simeq (\Z/2)^2$
are conjugate to each other, and one such is 
generated by involutions 
\begin{equation}\label{QQ}
\iota \colon (Y,Z)\mapsto (-Y,-Z)\ \text{ and } \varphi_1\colon  (Y,Z)\mapsto (1/Y,1/Z)
\end{equation}
where $Y,Z$ are inhomogeneous coordinates of projective lines. 
Therefore, $(S,\s)$ is exactly as in Example \ref{(4,2)} bis.\\

\noindent {\bf{Step 2:}} Next let $S$ have an automorphism $\s$ with $(\mathrm{ord}(\s), I(\s))=(8,4)$,
assuming that the $K3$-cover $X$ has Picard number $\rho (X)=10$. 
Let $\varphi$ be a lift of $\s$ to $X$, which has order 8 and index 8.
By the previous step, $(S,\s^2)$ comes from Example 1.1 bis., namely
$X$ is the double cover $W^2=g(Y,Z)$ where $g$ satisfies the conditions
\eqref{bis}
of Example 1.1 bis. The new automorphism $\varphi$ corresponds to an 
automorphism $\varphi_2$ of order 4 on $Q$ which satisfies $\varphi_2^2=\varphi_1$ 
in \eqref{QQ}.
Note that $PGL(2,\C)$ does not contain a subgroup isomorphic to $\Z/4\times \Z/2$, hence 
$\varphi_2$ exchanges the two rulings of $Q$. Now it is easy to see that such $\varphi_2$ is 
given only by $(Y,Z)\mapsto \pm (1/Z,Y)$. This is exactly the same as Example \ref{(8,4)} bis.
This completes the proof.

\end{document}